\tikzset{>=latex}
\tikzset{inner sep=3pt}
\tikzstyle arrowstyle=[scale=1]
\tikzset{->-/.style={decoration={
  markings,
  mark=at position .5 with {\arrow{>}}},postaction={decorate}}}
\tikzset{cross/.style={cross out, draw=black, minimum size=2*(#1-\pgflinewidth), inner sep=0pt, outer sep=0pt},
default radius will be 1pt. 
cross/.default={1pt}}
\tikzstyle directed=[postaction={decorate,decoration={markings,
    mark=at position .65 with {\arrow[arrowstyle]{stealth}}}}]
\tikzstyle reverse directed=[postaction={decorate,decoration={markings,
    mark=at position .65 with {\arrowreversed[arrowstyle]{stealth};}}}]
    \tikzset{cross/.style={cross out, draw=black, minimum size=2*(#1-\pgflinewidth), inner sep=0pt, outer sep=0pt},
cross/.default={1pt}}
\theoremstyle{plain}      
\newtheorem{thm}{Theorem}[section]
\newtheorem{lem}[thm]{Lemma}
\newtheorem{cor}[thm]{Corollary}
\newtheorem{defn}[thm]{Definition}
\DeclareMathOperator{\Ob}{Ob}
\DeclareMathOperator{\Perf}{\mathsf{Perf}}
\DeclareMathOperator{\per}{\mathsf{per}}
\DeclareMathOperator{\Mor}{Mor}
\def\bbK{\mathbb{K}}
\newcommand{\bone}{{\mathbbm{1}}}
\newcommand{\CC}{\mathcal{C}}
\newcommand{\cO}{\mathcal{O}}
\newcommand{\cA}{\mathcal{A}}
\newcommand{\DD}{\mathcal{D}}
\newcommand{\id}{{\sf id}}
\begin{document}
	
\title{On a theorem of Keller over a base ring}

\author{Zhihang Chen\;\; and \;\;Junwu Tu}

\date{}

\maketitle
	
\begin{abstract}
Let $X$ be a quasi-compact separated scheme over a base field. Keller proved a theorem stating that the cyclic homology of $X$ is canonically isomorphic to the cyclic homology of the dg category $\Perf(X)$ consisting of perfect complexes over $X$. This theorem shows the categorical nature of the cyclic homology. In this note, we generalize Keller's theorem to allow $X$ be defined over a base commutative ring.
\end{abstract}

\setcounter{tocdepth}{2}

\section{Introduction}

Let $X$ be a quasi-compact separated scheme over a base field $\bbK$. An important set of invariants associated with $X$ is its Hochschild homology $HH_\bullet(X)$ and cyclic homology $HC_\bullet(X)$. They are defined as the hyper-cohomology groups of the sheafification of the Hochschild complexes and cyclic chain complexes respectively.

On the other hand, let $\Perf(X)$ denote the dg category of perfect complexes over $X$. We emphasize that this is a dg-enhancement of the full subcategory of the bounded derived category $D^b(X)$ consisting of perfect complexes over $X$. For any small $\bbK$-linear dg category, we may also define purely algebraically its own Hochschild type invariants. In the case of $\Perf(X)$, we are naturally lead to the question if $HH_\bullet\big(\Perf(X)\big)$ (respectively $HC_\bullet\big(\Perf(X)\big)$) would be naturally isomorphic to the geometrically defined invariants $HH_\bullet(X)$ (respectively $HC_\bullet(X)$). In~\cite{keller1998cyclic}, Keller obtained the following result.

\begin{thm}\label{thm:intro-keller}
    Let $X$ be a quasi-compact separated scheme over a base field $\bbK$. Then there exist canonical isomorphisms 
    \begin{align*}
        & HH_\bullet\big(\Perf(X)\big) \to HH_\bullet(X),\\
        & HC_\bullet\big(\Perf(X)\big) \to HC_\bullet(X).
    \end{align*} 
\end{thm}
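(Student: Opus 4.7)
My plan is to prove Keller's theorem in three steps: (i) construct a natural comparison map from the categorical invariant to the geometric one; (ii) verify the isomorphism in the affine case, where both sides reduce to the classical Hochschild (resp.\ cyclic) homology of the coordinate ring; and (iii) use Zariski--Mayer--Vietoris descent on both sides to bootstrap from affine opens to a general quasi-compact separated scheme. For (i), for each open $U \subset X$ the restriction dg-functor $\Perf(X) \to \Perf(U)$ induces a map on Hochschild homology; assembling these as $U$ varies yields a morphism from the constant presheaf $U \mapsto HH_\bullet(\Perf(X))$ to the presheaf $U \mapsto HH_\bullet(\Perf(U))$, and composing with sheafification followed by hypercohomology gives the natural map
\[
HH_\bullet(\Perf(X)) \longrightarrow \mathbb{H}^\bullet(X, \mathcal{H}\!\mathcal{H}_X) = HH_\bullet(X),
\]
where $\mathcal{H}\!\mathcal{H}_X$ denotes the sheafified Hochschild complex. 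The case of $HC_\bullet$ is parallel. A refined chain-level construction proceeds by writing the Hochschild complex of $\Perf(X)$ using a cofibrant model and mapping its tensor powers termwise into \v{C}ech resolutions.

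For step (ii), if $X = \mathrm{Spec}(A)$ then $\Perf(X)$ is quasi-equivalent to the dg category $\Perf(A)$ of perfect $A$-modules, and the latter is Morita equivalent to $A$ viewed as a dg algebra concentrated in degree zero. By Keller's Morita invariance of Hochschild and cyclic homology of dg categories, $HH_\bullet(\Perf(X)) \cong HH_\bullet(A)$. On the geometric side, the sheafified Hochschild complex has quasi-coherent cohomology sheaves, so on an affine its hypercohomology is computed by global sections, again giving $HH_\bullet(A)$. A direct check shows the comparison map of step (i) is the identity under these identifications.

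For step (iii), given a decomposition $X = U \cup V$ with $U$ and $V$ quasi-compact opens, \v{C}ech-to-hypercohomology produces a Mayer--Vietoris long exact sequence on the geometric side. On the categorical side, Thomason--Trobaugh localization gives an exact sequence of dg categories $\Perf_Z(X) \to \Perf(X) \to \Perf(U)$ (with $Z := X \setminus U$), and the fact that $HH_\bullet$ and $HC_\bullet$ are localizing invariants (a theorem of Keller) turns this into a long exact sequence. Splicing the sequences for $U \subset X$ and for $U \cap V \subset V$ delivers a categorical Mayer--Vietoris sequence, compatible with the geometric one via step (i). A five-lemma argument combined with induction on the minimal number of affine opens needed to cover $X$ (finite by quasi-compactness, with all pairwise intersections affine by separatedness) then upgrades the affine case of step (ii) to the full statement.

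The principal obstacle is verifying Mayer--Vietoris on the categorical side: this rests on Thomason--Trobaugh's nontrivial localization theorem for perfect complexes and on Keller's theorem that Hochschild/cyclic homology are localizing invariants, both of which require serious work. A second, more technical, obstacle is producing a \emph{strictly} commutative comparison of the two Mayer--Vietoris triangles, not merely one commuting up to unspecified higher homotopy; this will require choosing models (e.g.\ compatible bar or \v{C}ech resolutions) so that the comparison map of step (i) is realized as a genuine map of chain complexes rather than a zigzag of quasi-isomorphisms, in order to legitimately invoke the five-lemma.
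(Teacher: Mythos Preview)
Your proposal is correct and follows essentially the same route as the paper: construct a comparison (trace) map, establish Mayer--Vietoris on the categorical side via Thomason--Trobaugh localization together with Keller's theorem that the mixed complex functor sends such exact sequences to triangles, and conclude by induction on the size of an affine cover. The paper works uniformly at the level of mixed complexes (so that $HH$, $HC$, $HC^-$, $HP$ are handled simultaneously), constructs the trace map via the sheafification $M(\mathcal{O})^\sharp \xrightarrow{\sim} M(\mathfrak{Perf})^\sharp$, and makes the Mayer--Vietoris comparison explicit by introducing perfect complexes with support $\Perf_Z(X)$ and a trace map $\tau_Z$ into local hypercohomology; these refinements address exactly the strict-commutativity concern you flag at the end.
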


 In the study of logrithmic Hochschild homology~\cite{Olsson,HABLICSEK2026127}, one is often lead to consider Hochschild homology relative to a base space~\cite{OLSSON2003747}. This serves as at least one of the motivations to obtain a generalization of Keller's theorem above in the relative setting. More precisely, we prove the following

\begin{thm}
    Let $\mathfrak{X}$ be a quasi-compact separated scheme over a base commutative ring $R$. Then there are canonical isomorphisms
    \begin{align*}
        & HH_\bullet\big(\Perf(\mathfrak{X})/R\big) \to HH_\bullet(\mathfrak{X}/R),\\
        & HC_\bullet\big(\Perf(\mathfrak{X})/R\big) \to HC_\bullet(\mathfrak{X}/R).
    \end{align*}
\end{thm}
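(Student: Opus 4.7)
The plan is to follow Keller's argument in~\cite{keller1998cyclic} closely, replacing each use of the base field hypothesis by an appropriate relative analogue. Over a base ring one must exercise care with flatness: the Hochschild and cyclic homology of an $R$-linear dg category $\cA$ are well-behaved only after replacing $\cA$ by an $R$-h-flat (equivalently, cofibrant) model $\tilde{\cA} \to \cA$. With this convention in place, $HH_\bullet(\cA/R)$ and $HC_\bullet(\cA/R)$ become Morita invariant and quasi-equivalence invariant, which are the two formal properties Keller's argument ultimately relies on. The analogous geometric invariants $HH_\bullet(\mathfrak{X}/R)$ and $HC_\bullet(\mathfrak{X}/R)$ are defined as hypercohomology of the sheafification of the (mixed) Hochschild complex of $\cO_\mathfrak{X}$ over $R$, formed using a derived tensor product $\cO_\mathfrak{X} \otimes_R^L \cO_\mathfrak{X}$.

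Next I would construct the canonical comparison morphism $HH_\bullet(\Perf(\mathfrak{X})/R) \to HH_\bullet(\mathfrak{X}/R)$. Following Keller, a Hochschild chain on $\Perf(\mathfrak{X})$ -- a composable cycle of morphisms $f_n,\ldots,f_0$ between perfect complexes -- is sent to the supertrace of the induced endomorphism, viewed as a section of the sheafified relative Hochschild complex. The construction is local and respects the Connes differential, so it simultaneously produces the comparison maps in both $HH$ and $HC$ as morphisms of mixed complexes.

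The third step is to verify that these comparison morphisms are quasi-isomorphisms, which I would do by Zariski--\v{C}ech descent. Since $\mathfrak{X}$ is separated and quasi-compact, it admits a finite affine open cover with affine finite intersections. The geometric side satisfies descent by definition. For the categorical side the required inputs are the Thomason--Trobaugh localization sequence for $\Perf$, which is valid over an arbitrary commutative base, and Keller's excision theorem for Hochschild homology of dg categories in the relative setting. A standard induction on the size of the cover then reduces the statement to the affine case $\mathfrak{X} = \mathrm{Spec}(A)$, where $\Perf(A)$ is Morita equivalent to the one-object dg $R$-category $A$; relative Morita invariance then identifies $HH_\bullet(\Perf(A)/R)$ with $HH_\bullet(A/R)$, which by definition equals $HH_\bullet(\mathfrak{X}/R)$.

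The main obstacle I expect is not the global descent step, which is essentially formal once the axioms are in place, but rather verifying that the foundational properties used in Keller's proof -- quasi-equivalence invariance, Morita invariance, and the localization/excision long exact sequence -- all continue to hold relative to a base commutative ring instead of a field. This is exactly where the passage from projective resolutions (which suffice over a field) to h-flat resolutions of $R$-linear dg categories becomes essential: without it, even Morita invariance of the naive Hochschild complex can fail. Once these relative foundations are established, the remainder of Keller's argument transports with only cosmetic changes.
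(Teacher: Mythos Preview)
Your proposal is correct and follows essentially the same strategy as the paper: both replace the dg categories involved by flat models (the paper uses semi-free resolutions, which are your h-flat/cofibrant replacements) so that Keller's invariance and excision/localization results carry over to the $R$-linear setting, and then run the Thomason--Trobaugh Mayer--Vietoris induction on an affine cover to reduce to the affine case. The only point the paper makes explicit that you leave implicit is the functoriality of these resolutions---that fully faithful functors and sequences with $GF=0$ lift to the semi-free level---which is what guarantees the exact sequence of dg categories still yields a triangle in $\DD Mix(R)$.
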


Note that in defining the Hochschild invariants above, when the structure sheaf $\cO_\mathfrak{X}$ is not flat over $R$, we need to replace it by a flat resolution. This homology theory is sometimes called Shukla-Hochschild homology~\cite{Shuk}. Similarly, such a replacement is also needed for the dg category $\Perf(\mathfrak{X})$. The proof of the relative case follows the same line as that of Theorem~\ref{thm:intro-keller}. Indeed, in Section~\ref{sec:review} we briefly sketch Keller's original proof. In Section~\ref{sec:relative} we deduce our main result by taking care of flatness using semi-free resolutions of dg categories.

\subsection{Conventions and Notations.} We work over a base field $\mathbb{K}$. For a dg category $\CC$ over $\mathbb{K}$, denote by $\underline{\CC}$ its homotopy category. This is a category with the same underlying objects as $\CC$ and with 
\[ {\sf Hom}_{\underline{\CC}}(X,Y):= H^0\big(\hom^\bullet_\CC(X,Y)\big).\]
For a scheme $X$, in Keller's original treatment~\cite{keller1998cyclic}, he uses the notion of localization pairs, i.e. a pair of dg categories $\big(\per(X),{\sf acyc}(X)\big)$ formed by the category of perfect complexes over $X$ and its full subcategory of acyclic perfect complexes. By a result of Drinfeld~\cite{drinfeld2004dg} on the construction of dg quotients of dg categories, we may first take the dg quotient $\per(X)/{\sf acyc}(X)$ as our definition of $\Perf(X)$. Due to this reason, in this paper we shall work exclusively with dg categories instead of localization pairs. In other words, in our setting, we have
\[ \underline{\Perf(X)} \cong \underline{\per(X)}/\underline{{\sf acyc}(X)}.\]
Thus no further localization is needed when working with $\Perf(X)$.

\section{Hochschild invariants and the trace map}

\subsection{Hochschild invariants of dg categories}

Let $\bbK$ be a field. A $\bbK$-linear dg category $\mathcal{C}$ consists of the following data:
\begin{enumerate}
    \item a set of objects ${\sf Ob}(\CC)$;
    \item for each pair of objects a cochain complex $\hom^\bullet(X,Y)$ with differential $d$;
    \item for each triple of objects an associative composition map
    \[ \hom^\bullet(Y,Z)\otimes_\bbK \hom^\bullet(X,Y) \to \hom^\bullet(X,Z),\]
    which is a chain map;
    \item for each object, there exits a unit $\bone_X\in \hom^0(X,X)$ such that $d\bone_X=0$ and $f\cdot \bone_X= \bone_Y\cdot f=f$ for any $f\in \hom^\bullet(X,Y)$.
\end{enumerate}

Let $\mathcal{C}$ be a small $\bbK$-linear dg category. We proceed to define its Hochschild chain complex $C_\bullet(\CC)$. As a graded vector space it is given by
\begin{align*}
C_\bullet(\CC) & := \bigoplus_{n\geq 0} C_n(\CC)[n],\\
C_n(\CC) &:= \bigoplus_{X_0, \ldots, X_n} \hom^\bullet(X_n, X_0) \otimes \hom^\bullet(X_{n-1}, X_n) \otimes \cdots \otimes \hom^\bullet(X_0, X_1),
\end{align*}
where the sum runs over all sequences $X_0, \ldots, X_n$ of objects of $\CC$. Its differential denoted by $b=d+\delta$ consists of two parts: the map $d$ is the tensor product differential induced from the differential on the hom complexes; the other part $\delta:C_n\to C_{n-1}$ for each $n\geq 1$ is defined explicitly by
\[ \delta(f_n|f_{n-1}|\cdots|f_0):=\sum_{i=1}^n (-1)^i f_n|\cdots|f_if_{i-1}|\cdots|f_0 + (-1)^{(n+\sigma)}f_0f_n|f_{n-1}|\cdots|f_1.\]
Furthermore, one may verify that chains of the form $f_n|\cdots|\bone_{X_i}|\cdots|f_0$ with $i\geq 1$ form a subcomplex of $C_\bullet(\CC)$. We denote its quotient complex by $C_\bullet^{red}(\CC)$. This is the {\sl reduced} Hochschild chain complex of $\CC$. The natural quotient map $C_\bullet(\CC)\to C_\bullet^{red}(\CC)$ is a quasi-isomorphism~\cite{loday2013cyclic}.

To obtain cyclic homology of $\CC$, let us consider the Connes' circle operator $B:C_\bullet^{red}(\CC) \to C_\bullet^{red}(\CC)$ defined as
\[ B(f_n|f_{n-1}|\cdots|f_0):= \sum_{j=0}^{n} \bone|\tau^j(f_n|f_{n-1}|\cdots|f_0),\]
where $\tau$ denotes the rotation action with the Koszul sign. Denote by
\[ M(\CC):= \big(C_\bullet^{red}(\CC),b,B\big)\]
the triple of data defined above. It is often called the mix complex associated to the small dg category $\CC$. From the mixed complex $M(\CC)$, one can also define the following homology theories of $\CC$:
\begin{itemize}
    \item {\bf Periodic cyclic homology.} $HP_\bullet(\CC):= H^\bullet\big( C_\bullet^{red}(\CC)[u^{-1},u]], b+uB\big)$;
    \item {\bf Negative cyclic homology.} $HC^-_\bullet(\CC):= H^\bullet\big( C_\bullet^{red}(\CC)[[u]], b+uB\big)$;
    \item {\bf Cyclic homology.} $HC_\bullet(\CC):= H^\bullet\big( C_\bullet^{red}(\CC)[u^{-1}], b+uB\big)$.
\end{itemize}

One may verify directly that the assignment $\CC \to M(\CC)$ defines a functor from the category of small $\bbK$-linear dg categories to the category of mixed complexes over $\bbK$. Furthermore, the functor sends quasi-isomorphisms to quasi-isomorphisms.

\subsection{The trace map}

Let $X$ be a quasi-compact separated scheme over a base field $\bbK$. Keller considers a pair of dg categories $\big(\per(X),{\sf acyc}(X)\big)$ formed by the category of perfect complexes over $X$ and its full subcategory of acyclic perfect complexes. Then he defines the associated mixed complex as
\[ M\big(\per(X),{\sf acyc}(X)\big):={\sf cone}\big( M({\sf acyc}(X))\to M(\per(X))\big),\]
using the canonical inclusion functor. Throughout the paper, we shall denote the pair above by $\Perf(X):= \big(\per(X),{\sf acyc}(X)\big)$. By a result of Drinfeld~\cite{drinfeld2004dg} on the construction of dg quotients of dg categories, this choice of notation should cause no confusion. In other words, we may first take the dg quotient $\per(X)/{\sf acyc}(X)$ and then just take its associated mixed complex.

For each Zariski open subset $U\subset X$, we obtain an assignment:
\[ U \mapsto  M\big(\Perf(U)\big),\]
which defines a presheaf of mixed complexes on $X$. Let us denote this presheaf by $M(\mathfrak{Perf})$. Its sheafification is denoted by $M(\mathfrak{Perf})^\sharp$.

On the other hand, for each Zariski open subset $U\subset X$, we may consider the full sub-category of $\Perf(U)$ with only one object $\cO_U$. Taking its associated mixed complex yields another presheaf:
\[ U \mapsto M\big( \Gamma(U,\cO_X)\big).\]
Denote this presheaf by $M(\cO)$. Similarly, its sheafification is denoted by $M(\cO)^\sharp$.

\begin{lem}
    The canonical inclusion map induces a quasi-isomorphism of sheaves on $X$:
    \[ i: M\big(\cO)^\sharp \to M(\mathfrak{Perf})^\sharp.\]
\end{lem}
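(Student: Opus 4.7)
The plan is to verify the quasi-isomorphism stalkwise, exploiting that sheafification is exact and that every point of $X$ has a cofinal system of affine open neighborhoods. Since filtered colimits of complexes commute with taking cohomology, it suffices to show that for each affine open $U = \mathrm{Spec}(A)$ contained in $X$, the map of mixed complexes
\[ M(A) \longrightarrow M(\Perf(U)) \]
induced by viewing $\cO_U$ as a single distinguished object of $\Perf(U)$ is a quasi-isomorphism. This reduces the lemma to a purely algebraic affine assertion.

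For the affine assertion, the approach is to invoke Morita invariance of the mixed-complex invariant for dg categories, a central result of Keller~\cite{keller1998cyclic}. The structure sheaf $\cO_U = A$ is a compact generator of $\underline{\Perf(U)}$, since this homotopy category is equivalent to the category of perfect complexes of $A$-modules and $A$ itself is a well-known compact generator of $D(A)$. Morita invariance then asserts that the inclusion of a generating one-object dg subcategory, whose dg pretriangulated idempotent-complete hull recovers $\Perf(U)$, induces a quasi-isomorphism on associated mixed complexes. Combined with the fact that $\CC \mapsto M(\CC)$ preserves quasi-equivalences, this yields the desired $M(A) \xrightarrow{\sim} M(\Perf(U))$ and hence the quasi-isomorphism of sheaves after sheafification.

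The main obstacle is packaging Morita invariance in the form compatible with the dg-quotient conventions adopted in this paper. In Keller's original treatment via localization pairs one writes $M(\Perf(U)) = {\sf cone}\bigl(M({\sf acyc}(U))\to M(\per(U))\bigr)$ and checks that passing from $\per(U)$ first to its strictly perfect subcategory and then to the one-object subcategory $\{A\}$ yields quasi-isomorphisms of mixed complexes. Translating this to the Drinfeld dg-quotient viewpoint used here requires verifying that dg quotients preserve quasi-equivalences on mixed complexes, which follows from standard facts about semi-free resolutions of dg categories but should be stated carefully. Note that quasi-compactness and separatedness of $X$ play no role in this local statement; they will only enter later, when globalizing to identify sheaf hypercohomology with the Hochschild complex of the whole scheme.
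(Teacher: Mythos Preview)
Your proposal is correct and follows the same approach as the paper, which proves the lemma in a single sentence: ``This can be checked on the stalks.'' You have simply (and correctly) unpacked what that sentence entails --- cofinality of affine opens, exactness of filtered colimits, and Morita invariance on each affine via the compact generator $\cO_U$ --- together with a careful remark on reconciling the localization-pair and dg-quotient conventions.
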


\begin{proof}
    This can be checked on the stalks.
\end{proof}

Since for any presheaf $\mathcal{F}$, there is a canonical morphism of presheaves $\mathcal{F}\to \mathcal{F}^\sharp$ to its sheafification. In our case, taking global sections of the morphism $M(\mathfrak{Perf})\to M(\mathfrak{Perf})^\sharp$ yields a natural map of mixed complexes
\[ j: M\big(\Perf(X)\big) \to \Gamma\big(X,M(\mathfrak{Perf})^\sharp\big).\]
To this end, let us consider the following diagram where the left vertical map $\tau$ is defined by requiring the commutativity of this diagram.
\begin{equation}\label{diagram:tau}
    \begin{CD}
    M\big(\Perf(X)\big) @>j>> \Gamma\big(X,M(\mathfrak{Perf})^\sharp\big) \\
    @V\tau VV   @VVV\\
    R\Gamma(X,M\big(\cO)^\sharp) @>i>> R\Gamma\big(X,M(\mathfrak{Perf})^\sharp\big)
\end{CD}
\end{equation}
Here the right vertical map is the canonical map from the global section functor $\Gamma$ to its derived functor $R\Gamma$. Note that in order for $\tau$ to be well-defined, it is important here we work in the derived category $\DD Mix$ of mixed complexes where the bottom horizontal map $i$ is inverted.

\section{Keller's theorem over a base field}\label{sec:review}

In~\cite{keller1998cyclic}, Keller obtained the following result.

\begin{thm}\label{thm:main}
   Let $X$ be a quasi-compact separated scheme over a base field $\bbK$. Then trace map $\tau: M\big(\Perf(X)\big)\to R\Gamma(X,M\big(\cO)^\sharp)$ defined in Diagram~\eqref{diagram:tau} is invertible in $\DD Mix$.
\end{thm}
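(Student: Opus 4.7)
The plan is to prove the theorem by induction on the minimal number $n$ of affine opens in a finite cover of $X$, combining a Mayer--Vietoris argument on both sides of $\tau$ with a direct verification in the affine base case.

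For the base case $X = \mathrm{Spec}(A)$, I would identify both the source and target of $\tau$ with the mixed complex $M(A)$ of $A$ viewed as a one-object dg category, compatibly with $\tau$. On the source, Morita invariance is the main input: the full sub-dg-category of $\per(X)$ on the compact generator $\cO_X$ has endomorphism algebra $A$ concentrated in degree zero, and its inclusion induces a quasi-isomorphism $M(A) \to M(\per(X))$; since every acyclic perfect complex on an affine is contractible, $M({\sf acyc}(X))$ is acyclic, so the cone defining $M(\Perf(X))$ is again $M(A)$. On the target, a \v{C}ech computation for a principal open cover identifies $R\Gamma(X, M(\cO)^\sharp)$ with $M(A)$, using flat localization of Hochschild homology to force the higher \v{C}ech cohomology to vanish. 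Under these identifications $\tau$ reduces to the identity.

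For the inductive step, quasi-compactness and separatedness allow me to write $X = U \cup V$ with $U$ affine and $V$ coverable by $n-1$ affines, whence $U \cap V$ is a union of at most $n-1$ principal opens of $U$. The aim is to produce a morphism of Mayer--Vietoris distinguished triangles in $\DD Mix$:
\begin{align*}
    M\big(\Perf(X)\big) &\to M\big(\Perf(U)\big) \oplus M\big(\Perf(V)\big) \to M\big(\Perf(U \cap V)\big), \\
    R\Gamma\big(X, M(\cO)^\sharp\big) &\to R\Gamma\big(U, M(\cO)^\sharp\big) \oplus R\Gamma\big(V, M(\cO)^\sharp\big) \to R\Gamma\big(U\cap V, M(\cO)^\sharp\big).
\end{align*}
The second is the standard sheaf-theoretic Mayer--Vietoris triangle. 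The first, which is where the real content lies, comes from restriction along the open immersions. Naturality of $\tau$ in open restrictions assembles these into a morphism of triangles; the induction hypothesis applied to $U$, $V$, and $U \cap V$ makes three of the four vertical arrows quasi-isomorphisms, and the two-out-of-three property in a distinguished triangle gives $\tau_X$ as well.

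The principal obstacle is constructing the Mayer--Vietoris triangle on the dg-categorical side. This rests on a Thomason--Trobaugh-style localization theorem: restriction along $U \hookrightarrow X$ fits into a short exact sequence of dg categories (up to Drinfeld idempotent completion) whose kernel is the dg category of perfect complexes supported on $X \setminus U$, and the analogous sequence for $U \cap V \subset V$ has the same kernel, so the two splice into a Mayer--Vietoris square. Additivity of the functor $M$ on such short exact sequences of dg categories then produces the distinguished triangle in $\DD Mix$. Keller's use of the localization-pair formalism $\Perf(X) = \big(\per(X),{\sf acyc}(X)\big)$, with $M(\Perf(X))$ defined as a cone, is exactly what makes this additivity transparent at the chain level and allows the Mayer--Vietoris triangle to be constructed explicitly as a bicomplex.
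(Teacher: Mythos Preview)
Your proposal is correct and follows essentially the same route as the paper: induction on the size of an affine cover, with the Mayer--Vietoris triangle on the categorical side obtained from Thomason--Trobaugh localization together with Keller's additivity of $M$ on exact sequences of dg categories, and the usual Mayer--Vietoris triangle for hypercohomology on the sheaf side. The one place where the paper is more explicit than your sketch is in verifying that $\tau$ is actually a morphism of triangles: rather than appealing to an abstract ``naturality in open restrictions,'' the paper introduces the supported version $\tau_Z: M(\Perf_Z(X)) \to R\Gamma_Z(X, M(\cO)^\sharp)$ via local cohomology $R\mathcal{H}_Z$, which is what matches the third term of the Thomason--Trobaugh localization sequence to the third term of the sheaf-theoretic sequence.
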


In this section, we sketch the proof of this theorem following~\cite{keller1998cyclic}. 

\subsection{Perfect complexes with supports}

Keller's proof makes use of perfect complexes with supports, allowing the application of Thomason-Trobaugh's localization theorem. Let $Z\subset X$ be a closed subset. Denote by $\Perf_Z(X)$ the category of perfect complexes on $X$ that are acyclic off $Z$. We proceed to define a trace map with support in $Z$:
\[ \tau_Z: M\big(\Perf_Z(X)\big) \to R\Gamma_Z\big(X,M(\cO_X)^\sharp\big),\]
where $R\Gamma_Z(X,-)$ denotes the local cohomology with support in $Z$. Indeed, for a sheaf $\mathcal{F}$ on $X$, we may consider the {\sl subsheaf of sections with support in $Z$}, denoted by $\mathcal{H}_Z(\mathcal{F})$. This is a left exact functor, denote its right derived functor by
\[ R\mathcal{H}_Z: \DD Mix(X) \to \DD Mix(Z).\]
This functor is right adjoint to the exact functor $i_*$ where $i:Z\hookrightarrow X$ denotes the inclusion map. Thus, we obtain a commutative diagram:
\[\begin{tikzcd}
M(\mathfrak{Perf}_Z)^\sharp \arrow[r] \arrow[dr,dashrightarrow] & M(\mathfrak{Perf})^\sharp  \\
i_*R\mathcal{H}_ZM(\mathfrak{Perf}_Z)^\sharp\arrow[u,"\simeq"]\arrow[r] & i_*R\mathcal{H}_ZM(\mathfrak{Perf})^\sharp\arrow[u]
\end{tikzcd}\]
Then we may define the desired map $\tau_Z$ using the following diagram.
\[\begin{tikzcd}
    M\big(\Perf_Z(X)\big) \arrow[r]\arrow[d,dashed,"\tau_Z"] & R\Gamma\big(X,M(\mathfrak{Perf}_Z)^\sharp\big)\arrow[d]\\
    R\Gamma_Z\big(X,M(\cO_X)^\sharp\big)\cong R\Gamma\big(X,i_*R\mathcal{H}_ZM(\cO_X)^\sharp\big) \arrow[r,"\simeq"] & R\Gamma\big(X,i_*R\mathcal{H}_ZM(\mathfrak{Perf})^\sharp\big)
\end{tikzcd}\]
The isomorphism in the lower left corner is by the Grothendieck spectral sequence.

\subsection{Mayer-Vietoris property}

Recall that a triangle functor $S \to T$ between triangulated categories is an equivalence up to factors if it is an equivalence onto a full subcategory whose closure under forming direct summands is all of $T$. A sequence of triangulated categories 
$0 \to \mathcal{R} \to \mathcal{S} \to \mathcal{T} \to 0$
is exact up to factors if the first functor is an equivalence up to factors onto the kernel of the second functor and the induced functor $\mathcal{S}/\mathcal{R} \to \mathcal{T}$ is an equivalence up to factors.

Let us also recall Thomason-Trobaugh's localization theorem~\cite{Thomason2007} of perfect complexes. 

\begin{thm}
    (1) Let $U\subset X$ be a quasi-compact open subscheme and let $Z=X\backslash U$. Then the sequence $0\to \underline{\Perf_Z(X)}\to \underline{\Perf(X)}\to \underline{\Perf(U)}\to 0$ is exact up to factors. Here $\underline{\CC}$ denotes the homotopy of $\CC$.

    (2) Suppose that $X=V\bigcup W$, with both $V$ and $W$ quasi-compact subschemes and put $Z=X\backslash W$. Then the following diagram is commutative.
    \[\begin{CD}
        0 @>>> \underline{\Perf_Z(X)} @>>> \underline{\Perf(X)} @>>> \underline{\Perf(W)} @>>> 0 \\
        @. @VV j^* V  @VVV @VVV @.\\
        0 @>>> \underline{\Perf_Z(V)}@>>> \underline{\Perf(V)} @>>> \underline{\Perf(V\cap W)} @>>> 0
    \end{CD}\]
    The horizontal lines in the diagram are exact up to factors and the restriction functor $j^*$ is an equivalence up to factors.
\end{thm}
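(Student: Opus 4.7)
The plan is to reduce both claims to Verdier's localization theorem for triangulated categories combined with the Thomason-Trobaugh extension theorem for perfect complexes. All arguments are carried out at the level of the homotopy categories $\underline{\CC}$, where the triangulated structure is available.

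For part (1), I would first identify the essential image of $\underline{\Perf_Z(X)} \to \underline{\Perf(X)}$ with the kernel of the restriction functor $j^\ast: \underline{\Perf(X)} \to \underline{\Perf(U)}$. Both inclusions are tautological: a perfect complex on $X$ is acyclic off $Z$ if and only if its restriction to $U = X \setminus Z$ is acyclic, i.e.\ becomes the zero object of $\underline{\Perf(U)}$. The induced functor $\underline{\Perf(X)}/\underline{\Perf_Z(X)} \to \underline{\Perf(U)}$ is then shown to be fully faithful by Verdier's calculus of fractions: a morphism in the quotient is represented by a roof $E \leftarrow E'' \to F$ whose structure map $E'' \to E$ has cone in $\underline{\Perf_Z(X)}$, and such a roof restricts on $U$ to a genuine morphism $E|_U \to F|_U$ since the map $E''|_U \to E|_U$ becomes invertible; one checks that this recovers the Hom-space in $\underline{\Perf(U)}$. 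The remaining and substantive point is essential surjectivity up to direct summands, i.e.\ the assertion that every perfect complex on $U$ is, after possibly adding a direct summand, the restriction of a perfect complex on $X$. This is precisely the Thomason-Trobaugh extension theorem, which I would invoke directly from~\cite{Thomason2007}.

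For part (2), the commutativity of the diagram is immediate from the functoriality of restriction along open immersions. The exactness-up-to-factors of the top row is part (1) applied to the pair $(X, W)$, and that of the bottom row is part (1) applied to $(V, V \cap W)$, whose closed complement in $V$ equals $Z$: indeed, $X = V \cup W$ forces $Z = X \setminus W \subset V$, so $V \setminus (V \cap W) = V \cap Z = Z$. The final claim is that $j^\ast: \underline{\Perf_Z(X)} \to \underline{\Perf_Z(V)}$ is an equivalence up to factors. This is an excision statement for complexes with support in a closed set lying inside an open subscheme: restricting from $X$ to $V$ loses no information because $Z \subset V$, and conversely a perfect complex on $V$ supported on $Z$ extends to $X$ by being acyclic on $W$, with only a direct-summand correction coming from a further application of Thomason-Trobaugh to the cover $\{V,W\}$.

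The main obstacle here is the Thomason-Trobaugh extension theorem itself: the obstruction to lifting a perfect complex from $U$ to $X$ is a class in negative $K$-theory, and the content of the theorem is that this obstruction always vanishes after passing to a direct summand. All other steps are formal manipulations of Verdier quotients and compatibility of restriction functors with localization sequences, so in a self-contained exposition I would cite~\cite{Thomason2007} as a black box at this single point.
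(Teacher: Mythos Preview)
Your outline is sound, but note that the paper does not actually prove this statement: it is quoted verbatim as Thomason--Trobaugh's localization theorem and cited to~\cite{Thomason2007} without argument. In that sense there is no ``paper's own proof'' to compare against; the authors treat the entire theorem as a black box, whereas you have unpacked it one layer further by isolating the extension theorem as the single nontrivial input and deriving the rest from Verdier localization.

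One point worth tightening in your sketch: the full faithfulness of $\underline{\Perf(X)}/\underline{\Perf_Z(X)} \to \underline{\Perf(U)}$ is not quite as formal as your roof argument suggests. Passing from a roof to a morphism on $U$ is easy, but showing that \emph{every} morphism $E|_U \to F|_U$ arises this way, and that two roofs giving the same map on $U$ are equivalent in the quotient, requires knowing that the calculus of fractions is controlled by the support condition---concretely, that a map of perfect complexes on $X$ which is a quasi-isomorphism over $U$ has cone in $\underline{\Perf_Z(X)}$ (clear) and, conversely, that any morphism over $U$ lifts to a morphism on $X$ after modifying the source by something supported on $Z$. The latter is itself part of the Thomason--Trobaugh package (their ``key proposition''), so in practice one ends up citing~\cite{Thomason2007} for full faithfulness as well, not only for essential surjectivity up to summands. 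Your excision argument for $j^\ast$ in part~(2) has the same feature: the extension of a $Z$-supported complex from $V$ to $X$ is again an instance of their extension result, as you acknowledge.
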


A crucial property of the mixed complex functor $M$ is its invariance under equivalences up to factors proved by Keller~\cite{keller1999cyclic}. This implies the following theorem.

\begin{thm}\label{thm:exactness-mix}
Let	$0 \rightarrow \mathcal{A} \xrightarrow{F} \mathcal{B} \xrightarrow{G} \mathcal{C} \rightarrow 0$ be a sequence of small triangulated dg categories such that $ F $ is fully faithful, $ GF = 0 $, and the induced sequence of homotopy categories
$ 0 \rightarrow \underline{\mathcal{A}} \rightarrow \underline{\mathcal{B}} \rightarrow \underline{\mathcal{C}} \rightarrow 0 $
is exact up to factors. Then the sequence
$ M(\mathcal{A}) \rightarrow M(\mathcal{B}) \rightarrow M(\mathcal{C}) $
is a triangle in $\DD Mix$.
\end{thm}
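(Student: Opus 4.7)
The plan is to reduce the statement to two well-known properties of the mixed complex functor $M$: (i) its behavior under dg quotients, and (ii) its invariance under equivalences up to factors, both established in Keller's work~\cite{keller1999cyclic}. Concretely, I would factor the map $G$ through the Drinfeld dg quotient $\mathcal{B}/\mathcal{A}$ as
\[ \mathcal{B} \xrightarrow{\pi} \mathcal{B}/\mathcal{A} \xrightarrow{\bar{G}} \mathcal{C}, \]
where $\bar{G}$ is well-defined because $GF=0$ at the dg level (one may need to check this up to coherent homotopies, but since $F$ is fully faithful and $\underline{\mathcal{A}}$ lands in the kernel of $\underline{G}$, Drinfeld's construction provides such a $\bar{G}$ canonically up to quasi-equivalence).

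The first key input is that for any fully faithful dg functor $F:\mathcal{A}\hookrightarrow \mathcal{B}$, the Drinfeld quotient sequence induces a triangle
\[ M(\mathcal{A}) \to M(\mathcal{B}) \to M(\mathcal{B}/\mathcal{A}) \]
in $\DD Mix$. This is the localization/excision property for Hochschild-type invariants of dg categories, proved by Keller; it rests on the semi-free resolution of $\mathcal{B}/\mathcal{A}$ coming from Drinfeld's explicit model, together with the fact that on this model the Hochschild complex of the quotient is computed by the cone of the natural map on Hochschild complexes.

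The second input is the invariance theorem: a dg functor that descends to an equivalence up to factors on homotopy categories induces a quasi-isomorphism of mixed complexes. By hypothesis, the induced functor $\underline{\mathcal{B}}/\underline{\mathcal{A}} \to \underline{\mathcal{C}}$ is an equivalence up to factors, and since the passage from the dg quotient to its homotopy category is compatible with Verdier quotients on homotopy categories, $\bar{G}$ itself is an equivalence up to factors at the homotopy level. Thus $M(\bar{G}) : M(\mathcal{B}/\mathcal{A}) \xrightarrow{\simeq} M(\mathcal{C})$ is an isomorphism in $\DD Mix$. Substituting this isomorphism into the triangle from the previous paragraph yields the desired triangle $M(\mathcal{A}) \to M(\mathcal{B}) \to M(\mathcal{C})$.

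The main obstacle will be the first input, namely justifying that the Drinfeld quotient produces a triangle on the level of mixed complexes. While this is standard in the literature, care is needed because $M$ is only invariant under quasi-equivalences (and more generally under equivalences up to factors) once one passes to $\DD Mix$; one must therefore either cite Keller's localization theorem for dg categories directly, or, if a self-contained argument is preferred, unwind the Drinfeld model to identify $C_\bullet^{red}(\mathcal{B}/\mathcal{A})$ with the cone of $C_\bullet^{red}(\mathcal{A})\to C_\bullet^{red}(\mathcal{B})$ up to quasi-isomorphism, compatibly with the Connes operator $B$. The verification of compatibility with $B$ is the most delicate point; everything else is formal manipulation in $\DD Mix$.
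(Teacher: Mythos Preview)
Your proposal is correct and follows the same route the paper takes: the paper simply asserts that the theorem is a consequence of Keller's results in~\cite{keller1999cyclic}, in particular the invariance of $M$ under equivalences up to factors, without spelling out the factorization through the dg quotient that you describe. Your write-up is in fact more explicit than the paper's, since you separate out the two inputs (the localization triangle $M(\mathcal{A})\to M(\mathcal{B})\to M(\mathcal{B}/\mathcal{A})$ and the invariance applied to $\bar G$), whereas the paper compresses both into a single citation; the only small inaccuracy is that in this field-case statement no semi-free resolution is needed for the Drinfeld quotient---that enters only in the relative version over a ring treated later in the paper.
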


Using the two theorems above, we obtain another commutative diagram with horizontal lines triangles in $\DD Mix$.
\[\begin{CD}
        0 @>>> M\big(\Perf_Z(X)\big) @>>> M\big(\Perf(X)\big) @>>> M\big(\Perf(W)\big) @>>> 0 \\
        @. @VV \simeq V  @VVV @VVV @.\\
        0 @>>> M\big(\Perf_Z(V)\big)@>>> M\big(\Perf(V)\big) @>>> M\big(\Perf(V\cap W)\big) @>>> 0
\end{CD}\]
Since the left vertical map is a quasi-isomorphism, we obtain another triangle in $\DD Mix$:
\[ 0\to M\big(\Perf(X)\big) \to M\big(\Perf(V)\big)\oplus M\big(\Perf(W)\big) \to M\big(\Perf(V\cap W)\big) \to 0,\]
proving the Mayer-Vietoris property of $M$. To prove Theorem~\ref{thm:main}, one argues that the trace map is a map of triangles:
\[\begin{CD}
        M\big(\Perf(X)\big) @>>> M\big(\Perf(V)\big)\oplus M\big(\Perf(W)\big) @>>> M\big(\Perf(V\cap W)\big) \\
        @VV\tau_X V  @VV\tau_{V}\oplus \tau_W V @VV\tau_{V\cap W} V \\
     R\Gamma(X,M\big(\cO)^\sharp)@>>> R\Gamma(V,M\big(\cO)^\sharp) \oplus R\Gamma(W,M\big(\cO)^\sharp)@>>> R\Gamma(V\cap W,M\big(\cO)^\sharp)
\end{CD}\]
where the bottom line is the Mayer-Vietoris triangle for hypercohomology.  Theorem~\ref{thm:main} is then proved by an induction argument on the cardinality of an affine open covering of $X$ using its quasi-compactness.

\section{The relative case}\label{sec:relative}

We would like to consider an analogue of Theorem~\ref{thm:main} in the relative case, i.e. for a separated quasi-compact scheme $\mathfrak{X} \to {\sf Spec\,R}$ defined over a commutative ring $R$. Looking through Keller's proof sketched in the previous section, some efforts are needed to deal with Theorem~\ref{thm:exactness-mix} in the relative case. Our main observation is to use semi-free resolutions of dg categories whenever flatness over $R$ is needed.

\subsection{Semi-free resolutions}

Throughout the section, we shall work with a base commutative ring $R$. 

\begin{defn}
	Let $\mathcal{A}$ be a dg category and $\mathcal{A}_0$ a dg subcategory such that $\Ob \cA_0=\Ob \cA$. We say that $\mathcal{A}$ is \emph{semi-free over $\cA_0$} if $\mathcal{A}$ can be represented as the union of an increasing sequence of dg subcategories $\mathcal{A}_i$, $i = 0,1,\dots$, so that $\Ob \mathcal{A}_i = \Ob \mathcal{A}$, and for every $i > 0$ the category $\mathcal{A}_i$ as a graded $R$-linear category over $\mathcal{A}_{i-1}$ \textup{(}i.e., with forgotten differentials in the $\hom$ complexes\textup{)} is freely generated over $\mathcal{A}_{i-1}$ by a family of homogeneous morphisms $f_\alpha$ such that $\mathrm{d}f_\alpha \in \Mor \mathcal{A}_{i-1}$.
\end{defn}

\begin{defn}
	A dg category $\mathcal{A}$ is called \emph{semi-free} if it is semi-free over $\mathcal{A}_{\mathrm{discr}}$, where $\mathcal{A}_{\mathrm{discr}}$ is the dg category with $\Ob \mathcal{A}_{\mathrm{discr}} = \Ob \mathcal{A}$ such that the endomorphism dg algebra of each object of $\mathcal{A}_{\mathrm{discr}}$ equals $R$ (with zero differential putting at degree zero) and $\hom_{\mathcal{A}_{\mathrm{discr}}}(X,Y) = 0$ if $X, Y$ are different objects of $\mathcal{A}_{\mathrm{discr}}$.
\end{defn}

\begin{lem}\cite[13.5]{drinfeld2004dg}\label{13.5}
	For every dg category \(\mathcal{A}\) there exists a semi-free dg category \(\tilde{\mathcal{A}}\) with \(\Ob \tilde{\mathcal{A}} = \Ob \mathcal{A}\) and a functor \(\Psi : \tilde{\mathcal{A}} \to \mathcal{A}\) such that \(\Psi(X) = X\) for every \(X \in \Ob \tilde{\mathcal{A}}\) and \(\Psi\) is a surjective quasi-isomorphism \(\hom(X,Y) \to \hom(\Psi(X), \Psi(Y))\) for every \(X,Y \in \tilde{\mathcal{A}}\).
\end{lem}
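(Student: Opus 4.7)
The plan is to construct $\tilde{\mathcal{A}}$ as the union of an increasing sequence $\tilde{\mathcal{A}}_0 \subset \tilde{\mathcal{A}}_1 \subset \cdots$ of semi-free dg subcategories built by an inductive small-object-argument procedure, equipped with compatible dg functors $\Psi_i : \tilde{\mathcal{A}}_i \to \mathcal{A}$ that fix objects, and to set $\Psi := \varinjlim_i \Psi_i$. The free adjunction of new generators at each stage serves both to enlarge the image of $\Psi_i$ and to kill cocycles in its kernel.

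The base case is $\tilde{\mathcal{A}}_0 := \mathcal{A}_{\mathrm{discr}}$, with $\Psi_0$ sending $\mathbf{1}_X \mapsto \mathbf{1}_X$ and being zero on off-diagonal hom complexes. One then alternates two inductive steps. In a \emph{surjectivity step}, for every pair $(X,Y)$ and every homogeneous $a \in \hom_{\mathcal{A}}(X,Y)$ not yet in the image of $\Psi_i$, one freely adjoins a generator $e_a$ of degree $|a|$ with $\Psi_{i+1}(e_a) := a$, whose differential $de_a$ is prescribed to be a chosen preimage of $da$ in $\tilde{\mathcal{A}}_i$; an auxiliary mapping-cone companion is introduced whenever such a preimage is still missing. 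In an \emph{acyclicity step}, for every cocycle $c \in \ker \Psi_i \cap Z^\bullet \hom_{\tilde{\mathcal{A}}_i}(X,Y)$ one freely adjoins $t_c$ of degree $|c|-1$ with $dt_c := c$ and $\Psi_{i+1}(t_c) := 0$. The checks that $d^2 = 0$ and that $\Psi_{i+1}$ remains a dg functor reduce, by freeness, to verifications on the newly adjoined generators, where they are immediate.

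Passing to the colimit, semi-freeness of $\tilde{\mathcal{A}}$ over $\mathcal{A}_{\mathrm{discr}}$ is built into the construction. The surjectivity steps ensure that every homogeneous morphism of $\mathcal{A}$ eventually lies in the image of $\Psi$, so $\Psi$ is surjective on hom complexes; in the presence of surjectivity, the quasi-isomorphism property is equivalent to acyclicity of $\ker \Psi$, which the acyclicity steps arrange, since any cocycle of $\ker \Psi$ appears at some finite stage and is bounded at the next acyclicity step.

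The main obstacle is the bookkeeping around the filler generators $e_a$ when $a$ is not a cocycle: semi-freeness demands $de_a \in \Mor \tilde{\mathcal{A}}_i$ mapping under $\Psi_i$ to $da$, and such a preimage need not be available at stage $i$. The standard fix is to coordinate the adjunctions in a Koszul-style manner — introducing, alongside $e_a$, a companion generator that supplies a preimage of $da$ one stage earlier — so that all necessary relations close up after finitely many steps. Once this coordination is in place, associativity of composition, unit preservation, and compatibility of $\Psi$ with composition and differential are routine.
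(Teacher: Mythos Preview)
Your proposal is correct and follows the same inductive small-object-argument strategy as the paper (which in turn follows Drinfeld). The one organizational difference worth noting is that the paper sidesteps your ``main obstacle'' entirely rather than patching it with companions: the first nontrivial stage $\tilde{\mathcal{A}}_1$ adjoins free generators $f_g$ only for the \emph{cocycles} $g$ of $\mathcal{A}$, with $df_g = 0$ and $\Psi_1(f_g) = g$. This makes $\Psi_1$ surjective on cocycles at no cost, so that at stage~2, for an arbitrary morphism $h$, the element $dh$ is a cocycle and already has a preimage in $\tilde{\mathcal{A}}_1$; one can then set $de_h$ equal to that preimage without any auxiliary bookkeeping. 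Your companion generators are precisely these cocycle-generators, just discovered as a fix rather than placed up front. The paper's acyclicity step is also phrased slightly differently: instead of bounding cocycles in $\ker\Psi_i$ with $\Psi(t_c)=0$, it bounds every cocycle $f$ whose image $\Psi_i(f)=dg$ is a coboundary, taking $\Psi_{i+1}(a_f)=g$. Both versions yield a surjective quasi-isomorphism; the paper's choice lets surjectivity be established once and for all at stage~2, after which only the acyclicity step is iterated.
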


\begin{proof}
	\((\tilde{\mathcal{A}}, \Psi)\) is constructed as the direct limit of \((\tilde{\mathcal{A}}_i, \Psi_i)\) where \(\Ob \tilde{\mathcal{A}}_i = \Ob \mathcal{A}\), \(\tilde{\mathcal{A}}_0 \rightarrow \tilde{\mathcal{A}}_1 \rightarrow \dots\), \(\Psi_i : \tilde{\mathcal{A}}_i \to \mathcal{A}\), \(\Psi_i|_{\tilde{\mathcal{A}}_{i-1}} = \Psi_{i-1}\), and the following conditions are satisfied:

\begin{enumerate}
\item[i)] \(\tilde{\mathcal{A}}_0\) is the discrete \(k\)-category;
\item[ii)] for every \(i \geq 1\) \(\tilde{\mathcal{A}}_i\) as a graded \(k\)-category is freely generated over \(\tilde{\mathcal{A}}_{i-1}\) by a family of homogeneous morphisms \(f_\alpha\) such that \(d f_\alpha \in \Mor \tilde{\mathcal{A}}_{i-1}\);
\item[iii)] for every \(i \geq 1\) and \(X,Y \in \Ob \tilde{\mathcal{A}}\) the morphism \(\hom_{\tilde{\mathcal{A}}_i}(X,Y) \to \hom_{\mathcal{A}}(\Psi(X), \Psi(Y))\) induces a surjective map between the sets of the cocycles;
\item[iv)] for every \(i \geq 2\) and \(X,Y \in \Ob \mathcal{A}\) the morphism \(\hom_{\tilde{\mathcal{A}}_i}(X,Y) \to \hom_{\mathcal{A}}(\Psi(X), \Psi(Y))\) is surjective;
\item[v)] for every \(i \geq 1\) and \(X,Y \in \Ob \mathcal{A}\) every cocycle \(f \in \hom_{\tilde{\mathcal{A}}_i}(X,Y)\) whose image in \(\hom_{\mathcal{A}}(\Psi(X), \Psi(Y))\) is a coboundary becomes a coboundary in \(\hom_{\tilde{\mathcal{A}}_{i+1}}(X,Y)\).
\end{enumerate}
	
One constructs \((\tilde{\mathcal{A}}_i, \Psi_i)\) by induction. Note that if property iii) or iv) holds for some \(i\) then it holds for \(i+1\), so after \((\tilde{\mathcal{A}}_2, \Psi_2)\) is constructed these two properties are automatic.
	
Next, we proceed to inductively construct these categories. The category $\tilde{\cA}_0$ is fixed by condition i). For each pair of objects $ X, Y \in \mathrm{Ob}\,\mathcal{A} $, let $ Z_{X,Y} := \{ g \in \mathrm{Hom}_{\mathcal{A}}(X,Y) \mid d g = 0 \} $ denote the set of all \textit{cocycles}. For each cocycle $ g \in Z_{X,Y} $, we introduce a formal homogeneous morphism $ f_g $, whose degree is the same as that of $ g $.  Then we define the category $\tilde{\cA}_1$ and the functor $\Psi_1$ as follows.
\begin{enumerate}
\item As a graded $R$-linear category: $\tilde{\cA}_1$ is the category freely generated by $\tilde{\mathcal{A}}_0$ and all $ f_g $ (for all $ X, Y $ and all $ g \in Z_{X,Y} $), with object set $\mathrm{Ob}\,\tilde{\mathcal{A}}_1 = \mathrm{Ob}\,\mathcal{A}$. More concretely, For any objects $ X, Y $, the space $\hom_{\tilde{\mathcal{A}}_1}(X,Y)$, as a graded module, is the free $R$-module generated by the following elements:  
\begin{itemize}  
\item Morphisms in $\tilde{\mathcal{A}}_0$ (i.e., when $ X = Y $, the identity $\mathrm{id}_X$ and the zero morphism),  
\item All $ f_g $ for $ g \in Z_{X,Y} $,  
\item And compositions of these morphisms (since it is freely generated, compositions impose no relations; thus the Hom module consists of all ``paths'' from $ X $ to $ Y $, where paths are composed of morphisms in $\tilde{\mathcal{A}}_0$ and $ f_g $).  
\end{itemize}  
\item The  differential on $\tilde{\cA}_1$ is defined by:  \begin{itemize}  
\item On $\tilde{\mathcal{A}}_0$, $ d = 0 $.  
\item On each generator $ f_g $, set $ d f_g = 0 $.  
\item Extend to all morphisms via the Leibniz rule. Since the differential of all generators is zero, the differential of the entire $\tilde{\mathcal{A}}_1$ is zero.  
\end{itemize}  
\item The functor $\Psi_1:\tilde{\cA}_1 \to \cA$ is defined by:  
\begin{itemize}  
\item On $\tilde{\mathcal{A}}_0$, $\Psi_1$ agrees with $\Psi_0$.  
\item On each generator $f_g$, set $\Psi_1(f_g) = g$.  
\item Extend it as  a dg functor. 
\end{itemize}  
\end{enumerate}

The construction of $\tilde{\mathcal{A}}_2$ is a bit more involved. 

\begin{itemize}
    \item[(a1)] The set of object is still given by $ \operatorname{Ob} \tilde{\mathcal{A}}_2 = \operatorname{Ob} \mathcal{A}$. 
    \item[(a2)] For each pair of objects $X, Y \in \operatorname{Ob} \mathcal{A}$ and each morphism $h \in \operatorname{Hom}_{\mathcal{A}}(X, Y)$, if $h$ is not in the image of $\Psi_1$ (i.e., there is no $f \in \operatorname{Hom}_{\tilde{\mathcal{A}}_1}(X, Y)$ such that $\Psi_1(f) = h$), add a homogeneous generator $e_h$ with the same degree as $h$. 
    \item[(a3)] For each pair of objects $X, Y \in \operatorname{Ob} \mathcal{A}$ and each cocycle $f \in \operatorname{Hom}_{\tilde{\mathcal{A}}_1}(X, Y)$ (i.e., $d f = 0$) such that $\Psi_1(f) = d g$ for some $g \in \operatorname{Hom}_{\mathcal{A}}(X, Y)$, add a homogeneous generator $a_f$ with degree $|f| - 1$.
    \item[(b1)] To define the differential $d e_h$ on the new generator $e_h$'s. Observe that since $\Psi_1 : \operatorname{Hom}_{\tilde{\mathcal{A}}_1}(X, Y) \to \operatorname{Hom}_{\mathcal{A}}(X, Y)$ is surjective on cocycles, for $d h \in \operatorname{Hom}_{\mathcal{A}}(X, Y)$ is a coboundary (hence a cocyle), there exists $a \in \operatorname{Hom}_{\tilde{\mathcal{A}}_1}(X, Y)$ such that $\Psi_1(a) = dh$. Choose such an $a$ and set $d e_h = a$. 
    \item[(b2)] Set $d a_f = f$. Then we may extend the differential to all morphisms in $\tilde{\cA}_2$ via the Leibniz rule.
    \item[(c)] Define $\Psi_2$ by extending the map $\Psi_1$ and setting $\Psi_2(e_h) = h$, $\Psi_2(a_f) = g$ (see part (a3) for the definition of $g$), and extend $\Psi_2$ to be a dg functor.
\end{itemize}
 
Finally, the construction of $\tilde{\cA}_{k+1}$ from $\tilde{\cA}_k$ for $k\geq 2$ is repeat the steps $(a3)$ and $(b2)$ above. 
\end{proof}

\begin{lem}
    If $\cA$ is semi-free, then it is flat over $R$.
\end{lem}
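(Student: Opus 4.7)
The plan is to prove flatness of each hom complex $\hom_\cA(X,Y)$ as an $R$-module by induction along the defining filtration $\cA_0 \subset \cA_1 \subset \cA_2 \subset \cdots$ of $\cA$, and then pass to the filtered colimit $\cA = \bigcup_i \cA_i$. Since flatness is a purely module-theoretic condition and the differential on $\hom_\cA(X,Y)$ plays no role, we can forget differentials and work with the underlying graded $R$-linear category; thus it suffices to check that the graded hom modules are flat (in fact, I expect to show that they are \emph{free}, which is stronger and cleaner).

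\medskip

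First I would handle the base case: $\cA_0 = \cA_{\mathrm{discr}}$ has $\hom_{\cA_0}(X,Y) = R$ when $X = Y$ and $0$ otherwise, so these are free $R$-modules. For the inductive step, suppose $\hom_{\cA_{i-1}}(X,Y)$ is free (in particular flat) for all $X,Y$. By definition, $\cA_i$, as a graded $R$-linear category, is freely generated over $\cA_{i-1}$ by a family of homogeneous morphisms $\{f_\alpha\}$, where I write $s(\alpha), t(\alpha)$ for the source and target of $f_\alpha$. Then the graded $R$-module $\hom_{\cA_i}(X,Y)$ admits the explicit direct sum decomposition
\[
\hom_{\cA_i}(X,Y) \;=\; \bigoplus_{n \geq 0}\; \bigoplus_{(\alpha_1,\ldots,\alpha_n)} \hom_{\cA_{i-1}}\bigl(t(\alpha_n),Y\bigr) \otimes_R \hom_{\cA_{i-1}}\bigl(t(\alpha_{n-1}),s(\alpha_n)\bigr) \otimes_R \cdots \otimes_R \hom_{\cA_{i-1}}\bigl(X,s(\alpha_1)\bigr),
\]
where the inner sum is over all finite sequences of generators with consistent sources and targets. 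This is just the ``alternating word'' description of morphisms in a free extension of a graded category: a general morphism in $\cA_i$ is an $R$-linear combination of composable strings $g_n f_{\alpha_n} g_{n-1} f_{\alpha_{n-1}} \cdots f_{\alpha_1} g_0$ with $g_j \in \Mor \cA_{i-1}$, subject only to the $\cA_{i-1}$-relations applied within each $g_j$.

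\medskip

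By the inductive hypothesis each factor $\hom_{\cA_{i-1}}(-,-)$ is a free $R$-module, so each summand in the display is free (tensor products of free $R$-modules over a commutative ring are free), and the direct sum of free modules is free. This completes the induction and shows that $\hom_{\cA_i}(X,Y)$ is free, in particular flat, for every $i$. Finally, since $\cA = \bigcup_i \cA_i$ with $\Ob \cA_i = \Ob \cA$ for all $i$, we have $\hom_\cA(X,Y) = \varinjlim_i \hom_{\cA_i}(X,Y)$ as $R$-modules. Because the class of flat $R$-modules is closed under filtered colimits, $\hom_\cA(X,Y)$ is flat, giving the claim.

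\medskip

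The only real subtlety, and the step I would write out most carefully, is the direct-sum decomposition of $\hom_{\cA_i}(X,Y)$: one must check that ``freely generated as a graded $R$-linear category over $\cA_{i-1}$'' really does produce this tensor/alternating-word description without spurious relations between the paths (other than those inherited from composition within $\cA_{i-1}$). This is a standard fact about free constructions on graded categories and follows from the universal property of the free graded $R$-linear category on a graph relative to a subcategory, but it is where all the combinatorial content of the proof is concentrated.
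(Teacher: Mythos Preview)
Your argument correctly shows that each graded $R$-module $\hom_\cA(X,Y)$ is flat (indeed, a filtered colimit of free modules). However, the paper's definition of ``flat'' is not the module-theoretic one you use: the paper calls $\cA$ flat if for every acyclic complex $N^\bullet$ of $R$-modules the tensor product complex $\hom^\bullet_\cA(X,Y)\otimes_R N^\bullet$ remains acyclic, i.e.\ each hom complex is $K$-flat (h-flat). Your assertion that ``the differential on $\hom_\cA(X,Y)$ plays no role'' is therefore the crux of the gap: for unbounded complexes, termwise flatness does \emph{not} imply $K$-flatness, so forgetting the differential is not permissible.

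The paper's proof uses the same filtration $\hom_{\cA_0}\subset\hom_{\cA_1}\subset\cdots$ but keeps the differential and runs the associated spectral sequence on $\hom^\bullet_\cA(X,Y)\otimes_R N^\bullet$. The semi-free hypothesis enters in two ways: the associated graded ${\sf Gr}^k\hom_\cA(X,Y)$ is free over $R$ (which you also establish, via your alternating-word description), \emph{and} the induced internal differential on ${\sf Gr}^k$ vanishes because $df_\alpha\in\Mor\cA_{i-1}$. Hence the $E_1$-page is $H^*({\sf Gr}^k\otimes N^\bullet)$ with differential $\id\otimes d_{N^\bullet}$, which is zero since $N^\bullet$ is acyclic and ${\sf Gr}^k$ is free; convergence of the spectral sequence (the filtration is bounded below and exhaustive) then gives acyclicity of the total tensor. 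Your inductive freeness computation is exactly the ``${\sf Gr}^k$ is free'' half of this; to close the gap you need to reinstate the differential and invoke either this spectral-sequence step or, equivalently, the observation that the filtration exhibits each $\hom^\bullet_\cA(X,Y)$ as a semi-free complex of $R$-modules, which is a standard sufficient condition for $K$-flatness.
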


\begin{proof}
    Recall a dg category over $R$ is called flat if for any pair of objects $X, Y\in \Ob \cA$ and any acyclic complex $N^\bullet$ of $R$-modules, the tensor product complex
    \[ \hom^\bullet_\cA(X,Y)\otimes_R N^\bullet\]
    remains acyclic. Now, assuming $\cA$ is semi-free, then the complex $\hom^\bullet_\cA(X,Y)$ is filtered by
    \[ \hom^\bullet_{\cA_0}(X,Y)\subset\cdots\subset \hom^\bullet_{\cA_k}(X,Y) \subset\cdots,\]
    which induces a filtration on the tensor product complex given by $\{\hom^\bullet_{\cA_k}(X,Y)\otimes N^\bullet\}_{k=0}^\infty$. We may run the spectral sequence assoicated to this filtration. The first page of the spectral sequence is 
    \[ H^*\big( {\sf Gr}^k(\hom^\bullet_\cA(X,Y))\otimes N^\bullet\big).\]
    By the semi-free property, we see the differential of the complex is $\id\otimes d_{N^\bullet}$ and the graded $R$-module ${\sf Gr}^k(\hom^\bullet_\cA(X,Y))$ is freely generated over $R$. Hence, the cohomology groups $H^*\big( {\sf Gr}^k(\hom^\bullet_\cA(X,Y))\otimes N^\bullet\big)=0$ for all $k\geq 0$. This proves the flatness of $\cA$.
\end{proof}

The proof of the following lemma is straight-forward from the construction of semi-free resolutions.

\begin{lem}\label{lem:main}
    The construction of semi-free resolutions has the following functorial properties.
    \begin{enumerate}
        \item If $F: \cA \to \mathcal{B}$ is a dg functor, then there exists a lifting $\tilde{F}: \tilde{\cA}\to \tilde{\mathcal{B}}$ to semi-free resolutions so that the following diagram is commutative. 
        \[\begin{CD}
            \tilde{\cA} @>\tilde{F}>> \tilde{\mathcal{B}}\\
            @V\Psi^\cA VV    @V\Psi^{\mathcal{B}}VV\\
            \cA @>F>> \mathcal{B}
        \end{CD}\]
        Furthermore, if $F$ is fully-faithful, we may also assume $\tilde{F}$ be fully-faithful.
        \item If $F: \cA\to \mathcal{B}$ and $G: \mathcal{B}\to \mathcal{C}$ be dg functors such that $G\circ F=0$, then their lifts to semi-free resolutions also satisfy $\tilde{G}\circ\tilde{F}=0$.
    \end{enumerate}
\end{lem}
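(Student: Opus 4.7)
The plan is to prove both parts by induction along the filtration $\tilde{\cA}_0 \subset \tilde{\cA}_1 \subset \cdots$ used in Lemma~\ref{13.5} to build the semi-free resolution. The key tools are property iii), that $\Psi$ is surjective on cocycles, and property v), that any cocycle in $\tilde{\cA}_i$ whose $\Psi$-image is a coboundary is already a coboundary in $\tilde{\cA}_{i+1}$. Together these provide precisely the lifting room needed at each inductive step.

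For part (1), I would set $\tilde{F}(X) := F(X)$ on objects, which also fixes the restriction to $\tilde{\cA}_0$. Inductively, assume $\tilde{F}$ is defined on $\tilde{\cA}_{i-1}$ with $\Psi^{\mathcal{B}}\tilde{F} = F\Psi^{\cA}$, and take a new generator $f_\alpha$ of $\tilde{\cA}_i$. Writing $c_\alpha := \Psi^{\cA}(f_\alpha)$, the element $\tilde{F}(df_\alpha)$ is a cocycle in $\tilde{\mathcal{B}}$ whose image $\Psi^{\mathcal{B}}(\tilde{F}(df_\alpha)) = F(dc_\alpha) = dF(c_\alpha)$ is a coboundary in $\mathcal{B}$. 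Property v) applied to $\tilde{\mathcal{B}}$ supplies some $\beta \in \tilde{\mathcal{B}}$ with $d\beta = \tilde{F}(df_\alpha)$; the residue $\Psi^{\mathcal{B}}(\beta) - F(c_\alpha)$ is a cocycle in $\mathcal{B}$, and property iii) lifts it to a cocycle $\gamma \in \tilde{\mathcal{B}}$ with $\Psi^{\mathcal{B}}(\gamma) = \Psi^{\mathcal{B}}(\beta) - F(c_\alpha)$; then $\tilde{F}(f_\alpha) := \beta - \gamma$ satisfies both required conditions. The fully-faithful clause is then automatic: in the commuting square $\Psi^{\mathcal{B}}\tilde{F} = F\Psi^{\cA}$, three of the four maps induce quasi-isomorphisms on hom-complexes, so the fourth does by two-out-of-three.

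For part (2), I would run an interleaved induction, simultaneously constructing $\tilde{F}$ on $\tilde{\cA}_i$ and $\tilde{G}$ on just enough of $\tilde{\mathcal{B}}$ to meet the image of $\tilde{F}(\tilde{\cA}_i)$. The guiding observation is that for any generator $f_\alpha$ of $\tilde{\cA}_i$, one has $\Psi^{\mathcal{C}}(\tilde{G}\tilde{F}(f_\alpha)) = GF(c_\alpha) = 0$, so the obstruction to strict vanishing lies in the acyclic subcomplex $\ker\Psi^{\mathcal{C}}$ and, by the inductive hypothesis $d(\tilde{G}\tilde{F}(f_\alpha)) = \tilde{G}\tilde{F}(df_\alpha) = 0$, is a cocycle there. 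Two independent freedoms are available: the cocycle correction $\gamma \in \ker\Psi^{\mathcal{B}}$ in the lift of $f_\alpha$ from part (1), and the analogous cocycle correction in the definition of $\tilde{G}$ on each new generator of $\tilde{\mathcal{B}}$. Using property v) applied to $\tilde{\mathcal{C}}$ to write the obstruction as a coboundary, one absorbs it into these corrections, so that the adjusted choice satisfies $\tilde{G}(\tilde{F}(f_\alpha)) = 0$ on the nose.

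The main obstacle is that the filtrations on $\tilde{\cA}$ and $\tilde{\mathcal{B}}$ do not align under $\tilde{F}$, so the two inductions cannot simply be run one after the other and the flexibility in each lifting step must be coordinated globally. The cleanest organization is the combined induction just sketched, alternating a one-step extension of $\tilde{F}$ along the filtration of $\tilde{\cA}$ with a carefully tuned partial extension of $\tilde{G}$ along $\tilde{\mathcal{B}}$ that enforces the vanishing. Verifying that the choices can always be made consistently — in particular that the cocycle correction on the $\tilde{\mathcal{B}}$-side can always be selected to map under $\tilde{G}$ to the prescribed coboundary in $\ker\Psi^{\mathcal{C}}$ — is the delicate point where essentially all the bookkeeping of the proof is concentrated, but it reduces at each step to an instance of the same lifting principle that drove part (1).
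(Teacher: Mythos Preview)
The paper gives no proof of this lemma beyond the single sentence ``straight-forward from the construction of semi-free resolutions,'' so you have supplied far more than the authors do. Your inductive lifting argument for the existence of $\tilde{F}$ in part~(1) is correct and is exactly the standard argument one expects: properties iii) and v) of Lemma~\ref{13.5} are precisely the surjective-quasi-isomorphism conditions needed to lift generators compatibly with both the differential and $\Psi$.

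Two points deserve comment. First, your two-out-of-three argument for the fully faithful clause yields only \emph{quasi}-fully-faithfulness of $\tilde{F}$, i.e.\ that the hom maps are quasi-isomorphisms. If the statement intends strict full faithfulness (isomorphism on hom complexes), this does not follow; one would instead build $\tilde{\mathcal{B}}$ first and take $\tilde{\cA}$ to be a suitable full sub-dg-category. For the application to Theorem~\ref{thm:exactness-mix-relative}, however, quasi-fully-faithful is what Keller's argument actually uses, so your version suffices there.

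Second, in part~(2) the step you flag as ``the delicate point'' is a genuine gap, not merely bookkeeping. You need a cocycle $\gamma \in \ker\Psi^{\mathcal{B}}$ with $\tilde{G}(\gamma)$ equal to a prescribed cocycle in $\ker\Psi^{\mathcal{C}}$, and nothing in properties iii)--v) guarantees that $\tilde{G}$ restricted to $\ker\Psi^{\mathcal{B}}$ hits any particular element of $\ker\Psi^{\mathcal{C}}$; acyclicity of $\ker\Psi^{\mathcal{C}}$ gives you a primitive $\delta$ there, but lifting $\delta$ back through $\tilde{G}$ while staying in $\ker\Psi^{\mathcal{B}}$ is a separate surjectivity statement that does not reduce to the part~(1) principle. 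A cleaner route is to fix $\tilde{G}$ first and then construct $\tilde{F}$ so that its image lands in the dg subcategory $\ker\tilde{G}\subset\tilde{\mathcal{B}}$, using that $\Psi^{\mathcal{B}}$ restricted to $\ker\tilde{G}$ is still a surjective quasi-isomorphism onto the appropriate target; this makes the vanishing automatic rather than something to be corrected after the fact.
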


\subsection{Keller's theorem over a ring}

Using semi-free resolutions, we may deal with Theorem~\ref{thm:exactness-mix} in the relative case. Indeed, for a small dg category $\cA$ over a commutative ring $R$. We define its mixed complex by setting
\begin{equation}\label{eq:mixed-ring}
    M(\cA):= M(\tilde{\cA}),
\end{equation} 
where $\tilde{\cA}$ is any semi-free resolution of $\cA$. Keller~\cite{keller1999cyclic} proves that this is well-defined in the sense that $M(\cA)$ is unique up to unique isomorphism in the derived category $\DD Mix(R)$.

\begin{thm}\label{thm:exactness-mix-relative}
Let	$0 \rightarrow \mathcal{A} \xrightarrow{F} \mathcal{B} \xrightarrow{G} \mathcal{C} \rightarrow 0$ be an exact sequence of small triangulated dg categories over $R$, i.e. such that $ F $ is fully faithful, $ GF = 0 $, and the induced sequence of homotopy categories
$ 0 \rightarrow \underline{\mathcal{A}} \rightarrow \underline{\mathcal{B}} \rightarrow \underline{\mathcal{C}} \rightarrow 0 $
is exact up to factors. Then the sequence
$ M(\mathcal{A}) \rightarrow M(\mathcal{B}) \rightarrow M(\mathcal{C}) $
is a triangle in $\DD Mix(R)$.
\end{thm}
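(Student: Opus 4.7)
The plan is to reduce this relative statement to Keller's original theorem over a field (Theorem~\ref{thm:exactness-mix}) by passing everywhere to semi-free resolutions, where flatness over $R$ becomes automatic. Since by Equation~\eqref{eq:mixed-ring} we have $M(\mathcal{A}) := M(\tilde{\mathcal{A}})$ and similarly for $\mathcal{B}$ and $\mathcal{C}$, it suffices to produce a triangle $M(\tilde{\mathcal{A}}) \to M(\tilde{\mathcal{B}}) \to M(\tilde{\mathcal{C}})$ in $\DD Mix(R)$ arising from a suitable lift of the given short exact sequence.

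First, I would apply Lemma~\ref{lem:main} to lift $F$ and $G$ to dg functors $\tilde{F}\colon \tilde{\mathcal{A}} \to \tilde{\mathcal{B}}$ and $\tilde{G}\colon \tilde{\mathcal{B}} \to \tilde{\mathcal{C}}$ between chosen semi-free resolutions, where $\tilde{F}$ may be taken fully faithful by part (1) and the composition satisfies $\tilde{G}\tilde{F} = 0$ by part (2). These lifts fit in a strictly commuting ladder over the original sequence in which each vertical comparison map $\Psi$ is a surjective quasi-isomorphism on all Hom complexes.

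Second, I would verify that the lifted sequence $0 \to \tilde{\mathcal{A}} \to \tilde{\mathcal{B}} \to \tilde{\mathcal{C}} \to 0$ inherits the hypotheses of the flat-case theorem. Full faithfulness of $\tilde{F}$ and $\tilde{G}\tilde{F}=0$ are built in by the previous step. For the homotopy-category condition, each $\Psi$ is a quasi-isomorphism on Hom complexes and therefore induces an equivalence $\underline{\tilde{\mathcal{A}}} \simeq \underline{\mathcal{A}}$ and similarly for $\mathcal{B}$ and $\mathcal{C}$. Under these equivalences the sequence $0 \to \underline{\tilde{\mathcal{A}}} \to \underline{\tilde{\mathcal{B}}} \to \underline{\tilde{\mathcal{C}}} \to 0$ is identified with the original homotopy sequence, so exactness up to factors is transferred from the bottom row to the top row.

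Finally, I would invoke Keller's argument (the proof of Theorem~\ref{thm:exactness-mix}) for the lifted sequence of semi-free, hence flat, dg categories. The key observation is that his argument uses the base being a field only through flatness of Hom complexes over the base, which ensures that the Hochschild and cyclic chain constructions commute with the relevant tensor products and with the formation of cones. Since each of $\tilde{\mathcal{A}}, \tilde{\mathcal{B}}, \tilde{\mathcal{C}}$ is flat over $R$ by the flatness lemma established above, his argument applies mutatis mutandis and yields a triangle $M(\tilde{\mathcal{A}}) \to M(\tilde{\mathcal{B}}) \to M(\tilde{\mathcal{C}})$ in $\DD Mix(R)$; by Equation~\eqref{eq:mixed-ring} this is the triangle $M(\mathcal{A}) \to M(\mathcal{B}) \to M(\mathcal{C})$ asserted by the theorem. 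The main obstacle will be this last verification: one has to walk carefully through Keller's proof and check that every appeal to exactness of tensor products is in fact only an appeal to flatness, and that the Morita-style reductions used in the argument are insensitive to replacing the base field by a commutative ring.
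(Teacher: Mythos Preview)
Your proposal follows essentially the same route as the paper: lift the sequence to semi-free resolutions via Lemma~\ref{lem:main} (preserving full faithfulness of $F$ and the relation $GF=0$), then apply Keller's argument in~\cite{keller1999cyclic} to the resulting sequence of flat dg categories. The paper's proof is terser and simply cites Keller for the flat case, whereas you spell out the intermediate verification that exactness up to factors transfers along the quasi-equivalences $\Psi$ and flag the need to check that Keller's argument uses only flatness; the underlying strategy is identical.
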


\begin{proof}
    By Lemma~\ref{lem:main} we may choose a semi-free resolution $0\to \tilde{\cA}\to \tilde{\mathcal{B}} \to \tilde{\CC} \to 0$ of the original sequence.  Then the result follows from Keller~\cite{keller1999cyclic}.
\end{proof}

With this preparation, Theorem~\ref{thm:main} holds in the relative case as well.

\begin{thm}\label{thm:main-relative}
   Let $\mathfrak{X}$ be a quasi-compact separated scheme over a base commutative ring $R$. Then trace map $\tau: M\big(\Perf(\mathfrak{X})\big)\to R\Gamma(\mathfrak{X},M\big(\cO)^\sharp)$ defined in Diagram~\eqref{diagram:tau} is invertible in $\DD Mix(R)$. Note that the mixed complex functor $M$ should be understood in the sense of Equation~\eqref{eq:mixed-ring}.
\end{thm}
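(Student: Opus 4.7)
The plan is to follow Keller's proof sketched in Section~\ref{sec:review} step by step, replacing each invocation of Theorem~\ref{thm:exactness-mix} by its relative analogue Theorem~\ref{thm:exactness-mix-relative} and interpreting every mixed complex of a dg category over $R$ via semi-free resolutions as in Equation~\eqref{eq:mixed-ring}. The two external geometric ingredients -- Thomason--Trobaugh's localization theorem for perfect complexes, and the Grothendieck adjunction $R\mathcal{H}_Z\dashv i_*$ underlying local cohomology -- hold over an arbitrary base commutative ring, so only the arguments at the level of mixed complexes require modification.

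First, I would construct the trace maps $\tau$ and $\tau_Z$ verbatim as in Section~\ref{sec:review}, noting that both the sheafification $M(\mathfrak{Perf})^\sharp$ and the diagram defining the trace make sense in $\DD Mix(R)$ once all mixed complexes are formed from semi-free resolutions. Applying Theorem~\ref{thm:exactness-mix-relative} to Thomason--Trobaugh's localization sequence $0\to\underline{\Perf_Z(\mathfrak{X})}\to\underline{\Perf(\mathfrak{X})}\to\underline{\Perf(U)}\to 0$ then yields the triangle
\[ M\big(\Perf_Z(\mathfrak{X})\big)\to M\big(\Perf(\mathfrak{X})\big)\to M\big(\Perf(U)\big) \]
in $\DD Mix(R)$; here Lemma~\ref{lem:main} is used to lift the inclusion and quotient functors to a compatible sequence of semi-free resolutions preserving the properties that $F$ is fully faithful and $GF=0$, so that the field-coefficient exactness applied to the resolutions produces the triangle. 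For a two-patch cover $\mathfrak{X}=V\cup W$ with $Z=\mathfrak{X}\setminus W$, the excision equivalence $\underline{\Perf_Z(\mathfrak{X})}\to\underline{\Perf_Z(V)}$ becomes a quasi-isomorphism after applying $M$ -- the invariance of $M$ under equivalences up to factors is the special case $\cA=0$ of Theorem~\ref{thm:exactness-mix-relative} -- upgrading the previous triangle into the Mayer--Vietoris triangle
\[ M\big(\Perf(\mathfrak{X})\big)\to M\big(\Perf(V)\big)\oplus M\big(\Perf(W)\big)\to M\big(\Perf(V\cap W)\big) \]
in $\DD Mix(R)$.

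On the target side, the Mayer--Vietoris triangle for hypercohomology of sheaves of mixed complexes is standard over any base ring, and functoriality of $\tau$ realizes $\tau_\mathfrak{X}$, $\tau_V\oplus\tau_W$, and $\tau_{V\cap W}$ as a morphism of triangles. A standard induction on the size of an affine covering of $\mathfrak{X}$ (using separatedness so that pairwise intersections remain affine) then reduces the theorem to the affine base case $\mathfrak{X}={\sf Spec\,}A$, where one must show that the trace $M\big(\Perf({\sf Spec\,}A)\big)\to M(A)$ is invertible in $\DD Mix(R)$, with $M(A)$ denoting the mixed complex of a semi-free $R$-algebra resolution of $A$.

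The affine case is where the main work lies. Over a field this is the familiar Morita-invariance statement that the one-object dg subcategory $\{\cO_{{\sf Spec\,}A}\}\hookrightarrow\Perf({\sf Spec\,}A)$ induces a quasi-isomorphism of mixed complexes because $\cO_{{\sf Spec\,}A}$ is a compact generator. The hard part in our setting will be bookkeeping the semi-free resolutions: using Lemma~\ref{lem:main}(1) applied to the fully faithful inclusion $\{A\}\hookrightarrow\Perf({\sf Spec\,}A)$, one must arrange a semi-free resolution of $\Perf({\sf Spec\,}A)$ that contains a semi-free $R$-algebra resolution of $A$ as a full dg subcategory, so that the field-coefficient Morita-invariance argument applied to the now flat resolutions produces a quasi-isomorphism between the corresponding mixed complexes. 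Combined with the fact that $R\Gamma$ on the affine scheme ${\sf Spec\,}A$ reduces to $\Gamma$ for the sheafification $M(\cO)^\sharp$ of a presheaf of (semi-free) mixed complexes, this identifies the affine trace with Morita invariance and closes the induction.
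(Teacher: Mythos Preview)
Your proposal is correct and takes essentially the same approach as the paper: the paper gives no explicit proof of Theorem~\ref{thm:main-relative} at all, merely asserting after Theorem~\ref{thm:exactness-mix-relative} that ``with this preparation, Theorem~\ref{thm:main} holds in the relative case as well,'' and your proposal is a faithful elaboration of exactly this strategy. One small terminological point: where you write ``field-coefficient exactness/Morita-invariance applied to the resolutions,'' what is actually being invoked is Keller's result in~\cite{keller1999cyclic} for \emph{flat} dg categories over an arbitrary commutative ring (which semi-free resolutions provide), not a reduction to a field---but your argument is otherwise sound.
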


\begin{cor}
    Let $\mathfrak{X}$ be a projective scheme, flat over a base commutative ring $R$. Then trace map $\tau: M\big({\sf Vect}(\mathfrak{X})\big)\to R\Gamma(\mathfrak{X},M\big(\cO)^\sharp)$ defined in Diagram~\eqref{diagram:tau} is invertible in $\DD Mix(R)$. Note that under the flatness assumption, resolutions are not needed in the construction of mixed complexes.
\end{cor}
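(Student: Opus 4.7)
The plan is to reduce the corollary to Theorem~\ref{thm:main-relative} by comparing the dg categories ${\sf Vect}(\mathfrak{X})$ and $\Perf(\mathfrak{X})$. Let $\iota: {\sf Vect}(\mathfrak{X}) \hookrightarrow \Perf(\mathfrak{X})$ denote the inclusion of the full dg subcategory spanned by bounded complexes of finite-rank locally free $\cO_\mathfrak{X}$-modules. Both dg categories contain $\cO_\mathfrak{X}$ and the trace map construction of Diagram~\eqref{diagram:tau} applies verbatim to ${\sf Vect}(\mathfrak{X})$, producing a commutative square
\[
\begin{CD}
M\big({\sf Vect}(\mathfrak{X})\big) @>M(\iota)>> M\big(\Perf(\mathfrak{X})\big) \\
@V\tau VV @VV\tau V \\
R\Gamma\big(\mathfrak{X},M(\cO)^\sharp\big) @= R\Gamma\big(\mathfrak{X},M(\cO)^\sharp\big)
\end{CD}
\]
in which the right vertical arrow is invertible in $\DD Mix(R)$ by Theorem~\ref{thm:main-relative}. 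It thus suffices to prove that $M(\iota)$ is invertible in $\DD Mix(R)$.

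The main step is to verify that $\iota$ is a quasi-equivalence of dg categories. Full faithfulness is automatic since ${\sf Vect}(\mathfrak{X})$ sits as a full dg subcategory of $\Perf(\mathfrak{X})$. For essential surjectivity at the level of homotopy categories, I would invoke the classical fact that on a projective scheme $\mathfrak{X}$ over $R$, every perfect complex $E^\bullet$ admits a quasi-isomorphism $V^\bullet \to E^\bullet$ with $V^\bullet$ a bounded complex of finite-rank locally free sheaves. Fixing a relatively ample line bundle $\cO_{\mathfrak{X}}(1)$, every coherent sheaf is covered by a direct sum of negative twists; iterating this procedure while invoking the finite Tor-amplitude of perfect complexes allows one to truncate the resulting resolution in bounded degrees. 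Since $M$ sends quasi-equivalences of dg categories to isomorphisms in $\DD Mix(R)$, this shows that $M(\iota)$ is an isomorphism.

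Finally, the flatness of $\mathfrak{X}$ over $R$ ensures that finite-rank locally free $\cO_\mathfrak{X}$-modules are $R$-flat, hence the $\hom$-complexes of ${\sf Vect}(\mathfrak{X})$ are degree-wise flat over $R$. Consequently the identity $M(\cA)=M(\tilde{\cA})$ of Equation~\eqref{eq:mixed-ring} may be taken with $\tilde{\cA}=\cA$, so no semi-free resolution is required in the definition of $M({\sf Vect}(\mathfrak{X}))$. The main technical obstacle in this outline is carrying out the vector bundle resolution of perfect complexes over an arbitrary commutative base $R$ which need not be Noetherian; here one relies on the finite-presentation version of the standard d\'evissage for projective morphisms of finite type, rather than the Noetherian hypothesis appearing in the usual treatment.
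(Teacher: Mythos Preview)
Your proposal is correct and follows the natural route implied by the paper, which states the corollary without proof immediately after Theorem~\ref{thm:main-relative}. The intended argument is precisely the one you give: projectivity furnishes an ample line bundle, so every perfect complex is quasi-isomorphic to a strictly perfect one, making $\iota:{\sf Vect}(\mathfrak{X})\hookrightarrow\Perf(\mathfrak{X})$ a quasi-equivalence; invariance of $M$ under quasi-equivalence then reduces to Theorem~\ref{thm:main-relative}, and flatness of $\mathfrak{X}/R$ lets one bypass the semi-free replacement~\eqref{eq:mixed-ring}. Your caveat about the non-Noetherian base is well placed; the relevant input (perfect $=$ strictly perfect in the presence of an ample family) is covered by Thomason--Trobaugh without Noetherian hypotheses.
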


\bibliographystyle{plain}
\bibliography{ref}

\vspace{1cm}

\textbf{Address:}

\vspace{.3cm}	
	
\noindent Zhihang Chen: Institute of Mathematical Sciences, ShanghaiTech University. 393 Middle Huaxia Road, Pudong New District, Shanghai, China, 201210. 

Email: {\tt chenzhh2022@shanghaitech.edu.cn}\\

\noindent Junwu Tu: Institute of Mathematical Sciences, ShanghaiTech University. 393 Middle Huaxia Road, Pudong New District, Shanghai, China, 201210. 

Email: {\tt tujw@shanghaitech.edu.cn}
	
\end{document}